\newtheorem{theorem}{Theorem}
\newtheorem{corollary}[theorem]{Corollary}
\newenvironment{proof}[1][Proof]{\noindent\textbf{#1.} }{\ \rule{0.5em}{0.5em}}
\begin{document}

\title{Minimal Sum of Powered Distances from the Sides of a Triangle}
\author{Elias Abboud \\
{\small The Academic Arab Institute, Faculty of Education, }\\
{\small Beit Berl College, Doar Beit Berl 44905, Israel}\\
{\small Email: eabboud@beitberl.ac.il\ }}
\maketitle

\section*{Introduction}

The Karush-Kuhn-Tucker (KKT) conditions are used to solve inequality
constrained nonlinear optimization problems. The systematic treatment of
inequality constraints was published in 1961 by Kuhn and Tucker \cite{KT}
and later it was found that the elements of the theory were contained in the
1939 unpublished M.Sci Dissertation of W. Karush at the University of
Chicago (see \cite[p. 355]{LY}). One type of inequality constrained
minimization for a nonlinear programming problem is of the form%
\begin{gather}
\text{minimize\ \ }f(\mathbf{x})  \notag \\
\text{s.t. }\mathbf{g(x)}\geq \mathbf{0}  \label{E1} \\
\mathbf{x}\in 
\Omega
\subseteq \mathbb{R}^{n},  \notag
\end{gather}%
where $f$ is a $C^{1}$ real-valued function and the feasible set $%
\Omega
$ is a subset of $\mathbb{R}^{n}$.

A point $\mathbf{x}\in 
\Omega
$ that satisfies $\mathbf{g(x)=(}g_{1}\mathbf{(x),...,}g_{m}\mathbf{(x))}%
\geq \mathbf{0}$ is said to be \textit{feasible}. An inequality constraint $%
g_{i}\mathbf{(x)}\geq 0$ is said to be \textit{active} at a feasible point $%
\mathbf{x}$ if $g_{i}\mathbf{(x)}=0$ and \textit{inactive} at $\mathbf{x}$
if $g_{i}\mathbf{(x)}>0$. A point $\mathbf{x}^{\ast }$ is said to be a 
\textit{regular point} of the constraints $g_{i}\mathbf{(x)}\geq 0,1\leq
i\leq m,$ if the gradient vectors $\nabla g_{i}\mathbf{(\mathbf{x}^{\ast })}%
,1\leq i\leq m,$ are linearly independent.

The KKT conditions are necessary conditions for a relative minimum (see \cite%
[p.340]{LY}): Let $\mathbf{x}^{\ast }$ be a relative minimum point for
problem (\ref{E1}) and suppose $\mathbf{x}^{\ast }$ is a regular point for
the constraints. Then there is a vector $\mathbf{\lambda }\in \mathbb{R}^{m},%
\mathbf{\lambda }\geq \mathbf{0,}$ of Lagrange multipliers such that $f(%
\mathbf{x}^{\ast })-\mathbf{\lambda }^{T}\nabla \mathbf{g(\mathbf{x}^{\ast })%
}=\mathbf{0}$ and $\mathbf{\lambda }^{T}\nabla \mathbf{g(\mathbf{x}^{\ast })}%
=0.$

For further reading on the KKT conditions we refer the readers to \cite{Ber}
and \cite{LY}. In this paper we shall apply the KKT conditions to find the
minimal sum of powered distances from the sides of an arbitrary triangle.
The special case $n=1$ was reviewed in \cite{Abb1} and \cite{Abb}. By
defining a suitable linear programming problem, it was concluded that the
sum of distances attains its minimum at the vertex through which the
smallest altitude of the triangle passes and this minimum equals the length
of the smallest altitude. In \cite{Abb}, the case $n=2$ was established for
isosceles triangles with vertices $A(0,a),B(-b,0)$\ and $C(b,0)$:\ The
minimal sum of squared distances from the sides of the isosceles triangle is 
$\frac{2a^{2}b^{2}}{a^{2}+3b^{2}}$\ attained at the point $(0,\frac{2ab^{2}}{%
a^{2}+3b^{2}}),$\ inside the triangle.

\subsection*{Formulation of the general problem}

Let $\Delta $ be the closed triangle, which includes both boundary and inner
points, with vertices $A(0,a),B(-b,0)$\textit{\ }and\textit{\ }$C(c,0),$
where $a>0,b>0$ and $\ c>0$. If the triangle is obtuse then we take $A(0,a)$
to be the vertex of the obtuse angle. Obviously, the side $AB$ lies on the
line $ax-by+ab=0$ and the side $AC$ lies on the line $ax+cy-ac=0$. Given a
point $P(x,y)$ in the plane, let $d_{1},d_{2}$ and $d_{3}$ be the distances
of $P$ from the sides $AB$, $AC$ and $BC,$ respectively. Evidently, these
distances satisfy the following equations:

\begin{eqnarray}
d_{1} &=&\left\vert \frac{ax-by+ab}{\sqrt{a^{2}+b^{2}}}\right\vert ,  \notag
\\
d_{2} &=&\left\vert \frac{-ax-cy+ac}{\sqrt{a^{2}+c^{2}}}\right\vert ,
\label{eq1} \\
d_{3} &=&\left\vert y\right\vert .  \notag
\end{eqnarray}

Notice that the absolute value can be omitted if the point $P(x,y)$ lies in $%
\Delta ,$ since $ax-by+ab\geq 0$, $-ax-cy+ac\geq 0$ and $y\geq 0.$ For any
integer $n,$ $n\geq 1,$ and any point $P(x,y)$ in the plane define the 
\textit{sum of powered distances function} 
\begin{equation}
F(x,y)=\left\vert \frac{ax-by+ab}{\sqrt{a^{2}+b^{2}}}\right\vert
^{n}+\left\vert \frac{-ax-cy+ac}{\sqrt{a^{2}+c^{2}}}\right\vert
^{n}+\left\vert y\right\vert ^{n}.  \label{eq1.4}
\end{equation}

The following observation is easy to verify so we omit the proof: \textit{%
For any point }$P(x,y)$\textit{\ outside the triangle }$\Delta $\textit{\
there exist a point }$P^{\prime }(x^{\prime },y^{\prime })$\textit{\ on the
boundary of }$\Delta $\textit{\ such that }$F(x^{\prime },y^{\prime })\leq
F(x,y).$\textit{\ }

Thus, we may restrict our feasible region to the closed triangle $\Delta $
and consider the nonlinear constrained optimization problem: 
\begin{eqnarray}
&&\text{minimize }F(x,y)  \notag \\
&&\text{s.t. \ }  \label{eq1.5} \\
&&\left\{ 
\begin{array}{c}
ax-by+ab\geq 0 \\ 
-ax-cy+ac\geq 0 \\ 
y\geq 0%
\end{array}%
\right. .  \notag
\end{eqnarray}

For any integer $n,$ $n\geq 2,$ denote 
\begin{eqnarray}
p &=&\sqrt{a^{2}+b^{2}}\text{, }q=\sqrt{a^{2}+c^{2}},  \notag \\
t &=&\sqrt[n-1]{\frac{p}{q}}\text{, }r=\sqrt[n-1]{\frac{b+c}{q}}\text{, }%
\lambda =q+br+cr+pt.  \label{eq2}
\end{eqnarray}

\section*{Minimal sum of powered distances}

\ We shall prove that the minimum of \ $%
F(x,y)=d_{1}^{n}+d_{2}^{n}+d_{3}^{n},n>1$, in the closed triangle is
attained at a unique point inside the triangle. The proof contains technical
computations which can be carried on using any Computer Algebra System.

\begin{theorem}
\label{mim inside triangle}For any integer $n\geq 2,$ the minimum in the
closed triangle $\Delta $ with vertices $A(0,a),B(-b,0)$\textit{\ and }$%
C(c,0),$ where $a>0,b>0$ and $\ c>0$, of the function 
\begin{equation*}
F(x,y)=\left( \frac{ax-by+ab}{\sqrt{a^{2}+b^{2}}}\right) ^{n}+\left( \frac{%
-ax-cy+ac}{\sqrt{a^{2}+c^{2}}}\right) ^{n}+y^{n},
\end{equation*}%
is 
\begin{equation*}
\frac{a^{n}}{\lambda ^{n}}(b+c)^{n}[t^{n}+r^{n}+1]
\end{equation*}%
attained at the point $(x_{\min },y_{\min })$ inside the triangle, where $%
x_{\min }=-\frac{bq-cpt}{\lambda }$ and $y_{\min }=\frac{abr+acr}{\lambda }.$
\end{theorem}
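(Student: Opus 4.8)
The plan is to exploit convexity. First I would record that $s\mapsto |s|^{n}$ is convex on $\mathbb{R}$ for every $n\geq 1$, so each summand of $F$ — being the $n$‑th power of the absolute value of an affine function of $(x,y)$ — is convex on $\mathbb{R}^{2}$, and hence $F$ itself is convex on $\mathbb{R}^{2}$. Since $F$ is continuous and $\Delta $ is compact, the minimum over $\Delta $ is attained; and because $F$ is convex, any point at which $F$ is differentiable with vanishing gradient is automatically a global minimizer. So it suffices to produce an interior critical point: no separate boundary discussion (e.g.\ applying the KKT conditions with one or two active constraints) is needed, although one could argue that way instead.

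On the open triangle the three forms $u=\frac{ax-by+ab}{p}$, $v=\frac{-ax-cy+ac}{q}$, $w=y$ are all positive, so there $F=u^{n}+v^{n}+w^{n}$ is smooth. The equation $\partial _{x}F=0$ reads $u^{n-1}/p=v^{n-1}/q$, i.e.\ $u=tv$; substituting this into $\partial _{y}F=0$, namely $w^{n-1}=b\,u^{n-1}/p+c\,v^{n-1}/q$, gives $w^{n-1}=(b+c)v^{n-1}/q$, i.e.\ $w=rv$. Now I would use the elementary algebraic identity
\[
pu+qv+(b+c)w=(ax-by+ab)+(-ax-cy+ac)+(b+c)y=a(b+c),
\]
valid at every point of the plane. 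Plugging in $u=tv$ and $w=rv$ yields $v\bigl(pt+q+(b+c)r\bigr)=a(b+c)$, hence $v=a(b+c)/\lambda $, $u=a(b+c)t/\lambda $, $w=a(b+c)r/\lambda $, all strictly positive. Positivity of $u,v,w$ says exactly that $(x,y)$ lies on the interior side of each of the three lines, i.e.\ inside $\Delta $, so the critical point is genuinely interior; and the minimum value is $u^{n}+v^{n}+w^{n}=v^{n}(t^{n}+r^{n}+1)=\frac{a^{n}}{\lambda ^{n}}(b+c)^{n}[t^{n}+r^{n}+1]$, as claimed.

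To get the coordinates I would solve $w=y$ for $y_{\min }=\frac{ar(b+c)}{\lambda }=\frac{abr+acr}{\lambda }$, and read off $x$ from $qv=-ax-cy+ac$ (equivalently from $pu=ax-by+ab$). Substituting the values of $v$ and $w$ and using $\lambda =q+br+cr+pt$, the bookkeeping identity $(b+c)(q+cr)-c\lambda =bq-cpt$ gives $x_{\min }=-\frac{bq-cpt}{\lambda }$. This CAS‑friendly simplification — collapsing the expressions into the stated closed forms in $p,q,t,r,\lambda $ — is the only genuinely computational part, and I expect it to be the main (though routine) obstacle; the structural step, convexity forcing the interior critical point to be the global minimum, is immediate.

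Finally, for uniqueness I would note that the minimizer set of a convex function is convex and contains the interior point $(x_{\min },y_{\min })$. On the open triangle the Hessian of $F$ equals $n(n-1)\bigl(u^{n-2}(\nabla u)(\nabla u)^{T}+v^{n-2}(\nabla v)(\nabla v)^{T}+w^{n-2}(\nabla w)(\nabla w)^{T}\bigr)$, a sum of positive‑semidefinite rank‑one matrices with strictly positive coefficients; since $\nabla u=(a/p,-b/p)$ and $\nabla w=(0,1)$ are already linearly independent (two sides of a triangle are never parallel), this Hessian is positive definite and $F$ is strictly convex there. Hence the minimizer set cannot contain a second point, for otherwise it would contain an entire interior sub‑segment on which $F$ is constant, contradicting strict convexity; so $(x_{\min },y_{\min })$ is the unique minimum point.
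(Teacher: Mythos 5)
Your proof is correct, and it takes a genuinely different route from the paper. The paper deliberately runs the full KKT machinery: it writes the Lagrangian, enumerates the seven sign patterns of the multipliers, rules out the three edge cases by sign contradictions, compares $F(x_{0},y_{0})$ against the vertex values $F(0,a)=a^{n}$, $F(-b,0)=a^{n}(\tfrac{b+c}{q})^{n}$, $F(c,0)=a^{n}(\tfrac{b+c}{p})^{n}$ (under the assumption $b\leq c$, with a reflection argument for $b>c$), solves the interior stationarity equations by row-reducing a $2\times 3$ augmented matrix, and checks second-order sufficiency by computing the Hessian determinant explicitly. You instead observe that $F$ (in its absolute-value form, which agrees with the stated $F$ on $\Delta$) is convex, so an interior point with vanishing gradient is automatically the global minimizer over $\Delta$ and the entire boundary/vertex case analysis, the vertex-value comparison ``left to the reader,'' and the $b>c$ reflection all become unnecessary; your identity $pu+qv+(b+c)w=a(b+c)$ replaces the row reduction and makes the closed forms for $v,u,w$ (hence the minimum value and the coordinates) drop out immediately; and your rank-one decomposition of the Hessian, $n(n-1)\bigl(u^{n-2}\nabla u\,\nabla u^{T}+v^{n-2}\nabla v\,\nabla v^{T}+w^{n-2}\nabla w\,\nabla w^{T}\bigr)$ with $\nabla u,\nabla w$ independent, gives strict convexity on the open triangle and hence uniqueness of the minimizer, a point the paper only treats implicitly. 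What the paper's route buys is its pedagogical aim — an explicit worked illustration of the KKT conditions, including the inactive/active constraint bookkeeping; what yours buys is a much shorter argument, no case analysis or symmetry trick, and a cleaner uniqueness conclusion. The only stylistic caveat is to state explicitly (as you implicitly do when ``reading off'' $x$ and $y$) that the affine map $(x,y)\mapsto(v,w)$ is invertible, so the prescribed values of $v$ and $w$ are actually attained at a unique point, with $u=tv$ then forced by the identity.
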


\begin{proof}
Define the Lagrangian function 
\begin{equation*}
L=F(x,y)-\lambda _{1}(ax-by+ab)-\lambda _{2}(-ax-cy+ac)-\lambda _{3}y,
\end{equation*}%
where $\lambda _{i}\geq 0,1\leq i\leq 3.$ Denote $\mathcal{D}=\left( \frac{%
ax-by+ab}{p}\right) ^{n-1}\geq 0$ and $\mathcal{E}=\left( \frac{-ax-cy+ac}{q}%
\right) ^{n-1}\geq 0.$ Then, the Karush-Kuhn-Tucker conditions (see \cite[%
p.341]{LY}) are%
\begin{eqnarray}
L_{x} &=&\frac{na}{p}\mathcal{D}-\frac{na}{q}\mathcal{E}-\lambda
_{1}a+\lambda _{2}a=0  \label{eq9} \\
L_{y} &=&\frac{-nb}{p}\mathcal{D}-\frac{nc}{q}\mathcal{E}+ny^{n-1}+\lambda
_{1}b+\lambda _{2}c-\lambda _{3}=0  \label{eq10} \\
\lambda _{1}(ax-by+ab) &=&0  \label{eq11} \\
\lambda _{2}(-ax-cy+ac) &=&0  \label{eq12} \\
\lambda _{3}y &=&0.  \label{eq13}
\end{eqnarray}%
We note first that $\lambda _{i}$ may be nonzero only if the corresponding
constraint is active. Hence, $(ax-by+ab)>0$ implies $\lambda _{1}=0$ and $%
\lambda _{1}>0$ implies $(ax-by+ab)=0.$ The same is true for $\lambda _{2}$
and $\lambda _{3}.$ Thus, to find a solution we define various combinations
of active constraints and check the signs of the resulting Lagrange
multipliers. Therefore, we have to consider the following cases derived from
equations (\ref{eq11})-(\ref{eq13}), according to whether $\lambda _{i}>0$
or $\lambda _{i}=0$. Notice also that the case $\lambda _{1}>0,\lambda
_{2}>0 $ and $\lambda _{3}>0$ implies an empty solution. Thus, we have to
consider $7$ cases.

\textsf{Case 1:}{\Large \ }$\lambda _{1}=\lambda _{2}=\lambda _{3}=0$ (The
feasible region is the set of interior points of $\Delta $).

In this case 
\begin{equation*}
L=F(x,y)=\left( \frac{ax-by+ab}{\sqrt{a^{2}+b^{2}}}\right) ^{n}+\left( \frac{%
-ax-cy+ac}{\sqrt{a^{2}+c^{2}}}\right) ^{n}+y^{n}.
\end{equation*}

Differentiating the function $F(x,y)$ with respect to $x$ and $y$, setting $%
\frac{\partial F}{\partial x}=\frac{\partial F}{\partial y}=0$ and putting $%
p=\sqrt{a^{2}+b^{2}},$ $q=\sqrt{a^{2}+c^{2}}$ we obtain the following
equations 
\begin{equation}
\frac{\partial F}{\partial x}=\frac{na}{p}\left( \frac{ax-by+ab}{p}\right)
^{n-1}-\frac{na}{q}\left( \frac{-ax-cy+ac}{q}\right) ^{n-1}=0,  \label{eq3}
\end{equation}%
\begin{equation}
\frac{\partial F}{\partial y}=\frac{-nb}{p}\left( \frac{ax-by+ab}{p}\right)
^{n-1}-\frac{nc}{q}\left( \frac{-ax-cy+ac}{q}\right) ^{n-1}+ny^{n-1}=0.
\label{eq4}
\end{equation}%
Since the point $(x,y)$ is located inside the triangle $\Delta $ then $%
ax-by+ab>0$, $-ax-cy+ac>0$ and $y>0.$ Hence, putting $t=\sqrt[n-1]{\frac{p}{q%
}}$ then equation (\ref{eq3}) simplifies into 
\begin{equation}
\frac{ax-by+ab}{p}=t\frac{-ax-cy+ac}{q}.  \label{eq5}
\end{equation}%
Substituting back in equation (\ref{eq4}) and eliminating $y^{n-1}$ we get 
\begin{equation*}
y^{n-1}=\left( \frac{-ax-cy+ac}{q}\right) ^{n-1}\frac{b+c}{q},
\end{equation*}%
which is equivalent to the following equation%
\begin{equation}
y=\frac{-ax-cy+ac}{q}r.  \label{eq6}
\end{equation}%
Simplifying equations (\ref{eq5}) and (\ref{eq6}) we obtain the system of
equations%
\begin{equation*}
\left\{ 
\begin{array}{c}
arx+(cr+q)y=acr \\ 
(aq+apt)x+(ptc-qb)y=ptac-qab%
\end{array}%
\right. .
\end{equation*}%
Now, reducing the augmented matrix of the system yields the row echelon form 
\begin{equation*}
\left( 
\begin{array}{ccc}
1 & 0 & -\frac{bq-cpt}{q+br+cr+pt} \\ 
0 & 1 & \frac{abr+acr}{q+br+cr+pt}%
\end{array}%
\right) .
\end{equation*}%
\ Hence, the solution is given by $x_{0}=-\frac{bq-cpt}{\lambda }$ and $%
y_{0}=\frac{abr+acr}{\lambda },$ where $\lambda =q+br+cr+pt.$

Carrying out the computations of the second derivatives, substituting $%
\mathcal{G}=\left( \frac{ax-by+ab}{p}\right) ^{n-2}$, $\mathcal{H}=\left( 
\frac{-ax-cy+ac}{q}\right) ^{n-2}$, and computing the determinant of the
Hessian matrix $\nabla ^{2}F(x,y)=\left( 
\begin{array}{cc}
\frac{\partial ^{2}F}{\partial x^{2}} & \frac{\partial ^{2}F}{\partial
x\partial y} \\ 
\frac{\partial ^{2}F}{\partial y\partial x} & \frac{\partial ^{2}F}{\partial
y^{2}}%
\end{array}%
\right) $ we obtain 
\begin{eqnarray*}
&&\left\vert \nabla ^{2}F(x,y)\right\vert \\
&=&\frac{n^{2}(n-1)^{2}}{p^{2}q^{2}y^{2}}\left( \mathcal{G}a^{2}q^{2}y^{n}+%
\mathcal{H}a^{2}p^{2}y^{n}+\mathcal{GH}a^{2}b^{2}y^{2}+\mathcal{GH}%
a^{2}c^{2}y^{2}+2\mathcal{GH}a^{2}bcy^{2}\right) .
\end{eqnarray*}%
Again, since the point $(x,y)$ is located inside the triangle $\Delta $ then 
$ax-by+ab>0$, $-ax-cy+ac>0$ and $y>0$, so we have $\mathcal{G}>0$ and $%
\mathcal{H}>0$. Hence, $\frac{\partial ^{2}F}{\partial x^{2}}=a^{2}\frac{%
n\left( n-1\right) }{p^{2}}\mathcal{G}+a^{2}\frac{n\left( n-1\right) }{q^{2}}%
\mathcal{H}>0$ and $\left\vert \nabla ^{2}F(x,y)\right\vert >0.$
Consequently, the Hessian matrix is positive definite and $(x_{0},y_{0})=(-%
\frac{bq-cpt}{\lambda },\frac{abr+acr}{\lambda })$ is the optimal solution
which minimizes $F(x,y)$.

In order to validate that the point ($x_{0},y_{0})$ is located inside the
triangle, it is enough to show that the point ($x_{0},y_{0})$ satisfies the
following constraints: $ax_{0}-by_{0}+ab>0,-ax_{0}-cy_{0}+ac>0$ and $%
y_{0}>0. $ The last constraint is clear so we proceed to check the first two
conditions. \ Recall that $\lambda =q+br+cr+pt$ then 
\begin{equation}
ax_{0}-by_{0}+ab=ap\frac{t}{\lambda }\left( b+c\right) >0  \label{eq7}
\end{equation}%
and\qquad \qquad \qquad\ 
\begin{equation}
-ax_{0}-cy_{0}+ac=a\frac{q}{\lambda }\left( b+c\right) >0.  \label{eq8}
\end{equation}

Finally, we compute the value of $F(x,y)$ at the optimal solution ($%
x_{0},y_{0}).$ Substituting the expressions from (\ref{eq7}) and (\ref{eq8})
we get%
\begin{eqnarray*}
F(x_{0},y_{0}) &=&\left( \frac{apt\left( b+c\right) }{\lambda p}\right)
^{n}+\left( \frac{aq\left( b+c\right) }{\lambda q}\right) ^{n}+(\frac{abr+acr%
}{\lambda })^{n} \\
&=&\frac{a^{n}}{\lambda ^{n}}(b+c)^{n}[t^{n}+1+r^{n}].
\end{eqnarray*}

\textsf{Case 2:}{\Large \ }$\lambda _{1}=\lambda _{2}=0${\Large \ }and%
{\Large \ }$\lambda _{3}>0$ (The feasible region is the segment $BC$).

From equations (\ref{eq13}) and (\ref{eq10}) we have $y=0$ and $\lambda _{3}=%
\frac{-nb}{p}\mathcal{D}-\frac{nc}{q}\mathcal{E}\leq 0,$ a contradiction.

\textsf{Case 3:}{\Large \ }$\lambda _{1}=\lambda _{3}=0${\Large \ }and%
{\Large \ }$\lambda _{2}>0$ (The feasible region is the segment $AC$).

From equation (\ref{eq12}) we have $-ax-cy+ac=0$ and therefore $\mathcal{E}%
=0.$ On the other hand, equation (\ref{eq9}) implies $\lambda _{2}=-\frac{n}{%
p}\mathcal{D}\leq 0,$ a contradiction.

\textsf{Case 4:}{\Large \ }$\lambda _{2}=\lambda _{3}=0${\Large \ }and%
{\Large \ }$\lambda _{1}>0$ (The feasible region is the segment $AB$).

From equation (\ref{eq11}) we have $ax-by+ab=0$ and therefore $\mathcal{D}%
=0. $ Equation (\ref{eq9}) implies $\lambda _{1}=-\frac{n}{q}\mathcal{E}\leq
0,$ a contradiction.

\textsf{Cases 5-7:}{\Large \ }Only one of $\lambda _{1},\lambda _{2}$\ or $%
\lambda _{3}$\ is $0$. The three distinct pairs of equations (\ref{eq11})-(%
\ref{eq13})\ imply that the feasible solution is one of the corresponding
vertices $A(0,a),B(-b,0)$\textit{\ or }$C(c,0).$ Hence, we have to compare
the values of the function $F$ at the point $(x_{0},y_{0})$ and at the
vertices of the triangle: $F(x_{0},y_{0})=\frac{a^{n}}{\lambda ^{n}}%
(b+c)^{n}[t^{n}+r^{n}+1]$, $F(0,a)=a^{n}$, $F(-b,0)=a^{n}(\frac{b+c}{q})^{n}$
and $F(c,0)=a^{n}(\frac{b+c}{p})^{n}.$

Assume first that $b\leq c.$ Now, from (\ref{eq2}) we have $t^{n}+r^{n}+1=t%
\frac{p}{q}+r\frac{b+c}{q}+1=\frac{\lambda }{q}.$ Observe that $\frac{q}{%
\lambda }<1$ and $\frac{p}{q}\leq 1.$ Hence, we can deduce that the value of 
$F$ at $(x_{0},y_{0})$ is less than each of its values at the points $%
(0,a),(-b,0)$ and $(c,0)$ (the details are left to the reader).
Consequently, for $b\leq c$, the minimum in the the closed triangle $\Delta $
of the function $F(x,y)$ is $\frac{a^{n}}{\lambda ^{n}}%
(b+c)^{n}[t^{n}+r^{n}+1]$ attained at the point $(x_{\min },y_{\min
})=(x_{0},y_{0})$ inside the triangle, where $x_{\min }=-\frac{bq-cpt}{%
\lambda }$ and $y_{\min }=\frac{abr+acr}{\lambda },$ in agreement with the
statement of the theorem.

Finally, the case $b>c$ can be proved by reflecting the triangle $\Delta $
across the $y-$axis. The set of values of the function $F$ will be preserved
and the minimum, inside the image triangle $\Delta ^{\prime },$ will be
attained at the reflection point $(x_{\min }^{\prime },y_{\min }^{\prime
})=(-x_{\min },y_{\min })=(-\frac{b^{\prime }q^{\prime }-c^{\prime
}p^{\prime }t^{\prime }}{\lambda ^{\prime }},\frac{a^{\prime }b^{\prime
}r^{\prime }+a^{\prime }c^{\prime }r^{\prime }}{\lambda ^{\prime }})$, where 
$b^{\prime }=c$, $c^{\prime }=b$, $p^{\prime }=q$, $q^{\prime }=p$, $%
r^{\prime }=\frac{r}{t}$, $t^{\prime }=\frac{1}{t}$ and $\lambda ^{\prime }=%
\frac{\lambda }{t}.$ We leave the details to the reader to show that $%
x_{\min }$ and $y_{\min }$ satisfy the same relations as in the theorem.
\end{proof}

\subsection*{The special case $n=2$}

If $n=2$ then the minimal sum of the squared distances from the sides of the
triangle $\Delta $, with vertices $A(0,a),B(-b,0)$\textit{\ }and\textit{\ }$%
C(c,0),$ is $\frac{a^{2}}{\lambda ^{2}}(b+c)^{2}[t^{2}+r^{2}+1].$ This
minimum is attained at the point $(x_{\min },y_{\min })=(-\frac{bq-cpt}{%
\lambda },\frac{abr+acr}{\lambda })$ inside the triangle, where $t=\frac{p}{q%
}$ and $r=\frac{b+c}{q}$. In particular, if the triangle $\Delta $ is
isosceles then $b=c.$ Therefore, the minimal sum of the squared distances is 
$\frac{a^{2}}{\lambda ^{2}}(b+c)^{2}[t^{2}+r^{2}+1]=\frac{2a^{2}b^{2}}{%
a^{2}+3b^{2}}.$ Moreover, $x_{\min }=-\frac{bq-cpt}{\lambda }=0$ and $%
y_{\min }=\frac{abr+acr}{\lambda }=\frac{2ab\frac{2b}{p}}{2\frac{a^{2}+3b^{2}%
}{p}}=\frac{2ab^{2}}{a^{2}+3b^{2}}.$

\subsection*{The sequence of critical points}

We fix the triangle $ABC$ with vertices $A(0,a),B(-b,0)$\textit{\ }and%
\textit{\ }$C(c,0)$ and for each $n\geq 2$ we let $(x_{\min ,n},y_{\min ,n})$
be the critical point at which the function $F$ in Eq. (\ref{eq1.4}) attains
its minimum. By Theorem \ref{mim inside triangle}, we have

\begin{equation*}
x_{\min ,n}=-\frac{bq-cpt_{n}}{\lambda _{n}}\text{ and }y_{\min ,n}=\frac{%
ab+ac}{\lambda _{n}}r_{n}.
\end{equation*}%
where by Eq. (\ref{eq2}), $t_{n}=\sqrt[n-1]{\frac{p}{q}}$, $r_{n}=\sqrt[n-1]{%
\frac{b+c}{q}}$, $\lambda _{n}=q+(b+c)r_{n}+pt_{n}$, $p=\sqrt{a^{2}+b^{2}}$%
and $q=\sqrt{a^{2}+c^{2}}.$   

Clearly, for fixed $a,b$ and $c$ we have $\underset{n\rightarrow \infty }{%
\lim }r_{n}=\underset{n\rightarrow \infty }{\lim }t_{n}=1$ and $\underset{%
n\rightarrow \infty }{\lim }\lambda _{n}=p+q+b+c.$ Hence,

\begin{equation*}
\underset{n\rightarrow \infty }{\lim }x_{\min ,n}=-\frac{bq-cp}{p+q+b+c},
\end{equation*}%
and

\begin{equation*}
\underset{n\rightarrow \infty }{\lim }y_{\min ,n}=\frac{ab+ac}{p+q+b+c}.
\end{equation*}%
Now,  
\begin{equation*}
E(-\frac{bq-cp}{p+q+b+c},\frac{ab+ac}{p+q+b+c})
\end{equation*}
is the incenter of the triangle $ABC$. To verify this fact it is sufficient
to prove that the point $E$ has equal distances from the sides of $ABC.$ By
Eq. (\ref{eq1}), the distance of $E$ from the side $AB$ is given by%
\begin{eqnarray*}
d_{1} &=&\left\vert \frac{-a\frac{bq-cp}{p+q+b+c}-b\frac{ab+ac}{p+q+b+c}+ab}{%
p}\right\vert  \\
&=&\allowbreak \frac{ab+ac}{b+c+p+q}.
\end{eqnarray*}%
Likewise, the distance of $E$ from the side $AC$ is given by

\begin{eqnarray*}
d_{2} &=&\left\vert \frac{a\frac{bq-cp}{p+q+b+c}-c\frac{ab+ac}{p+q+b+c}+ac}{q%
}\right\vert  \\
&=&\frac{ab+ac}{b+c+p+q}.
\end{eqnarray*}

Thus, we have the following conclusion.

\begin{corollary}
Let $ABC$ be the triangle with vertices $A(0,a),B(-b,0)$\textit{\ }and%
\textit{\ }$C(c,0)$ and let $(x_{\min ,n},y_{\min ,n})$ be the critical
point at which the function $F$ in Eq. (\ref{eq1.4}) attains its minimum.
Then

\begin{equation*}
\underset{n\rightarrow \infty }{\lim }x_{\min ,n}=-\frac{bq-cp}{p+q+b+c},
\end{equation*}%
and%
\begin{equation*}
\underset{n\rightarrow \infty }{\lim }y_{\min ,n}=\frac{ab+ac}{p+q+b+c},
\end{equation*}%
where $(-\frac{bq-cp}{p+q+b+c},\frac{ab+ac}{p+q+b+c})$ is the incenter of
the triangle $ABC.$
\end{corollary}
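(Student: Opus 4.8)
The plan is to invoke Theorem~\ref{mim inside triangle} to obtain the closed forms of $x_{\min,n}$ and $y_{\min,n}$, pass to the limit termwise, and then identify the limiting point with the incenter of $ABC$. Since Theorem~\ref{mim inside triangle} guarantees that for every $n\ge 2$ the minimum of $F$ in $\Delta$ is attained at the single interior point $(x_{\min,n},y_{\min,n})=\bigl(-\tfrac{bq-cpt_n}{\lambda_n},\tfrac{(ab+ac)r_n}{\lambda_n}\bigr)$, the sequence is well defined, and the whole statement reduces to a limit computation.

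First I would record the elementary fact that for any fixed positive real $k$ one has $k^{1/(n-1)}\to 1$ as $n\to\infty$ (take logarithms). With $a,b,c$ fixed, applying this to $k=p/q$ and to $k=(b+c)/q$ gives $\lim_{n\to\infty}t_n=\lim_{n\to\infty}r_n=1$, hence $\lim_{n\to\infty}\lambda_n=q+(b+c)+p>0$. Because $\lambda_n$ tends to a nonzero limit, the arithmetic operations defining $x_{\min,n}$ and $y_{\min,n}$ are continuous in $(t_n,r_n,\lambda_n)$ at the limiting values, so
\[
\lim_{n\to\infty}x_{\min,n}=-\frac{bq-cp}{p+q+b+c},\qquad
\lim_{n\to\infty}y_{\min,n}=\frac{ab+ac}{p+q+b+c}.
\]

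It then remains to identify $E=\bigl(-\tfrac{bq-cp}{p+q+b+c},\tfrac{ab+ac}{p+q+b+c}\bigr)$ as the incenter of $ABC$. The quickest route is the barycentric formula: the incenter equals $\frac{|BC|\cdot A+|CA|\cdot B+|AB|\cdot C}{|BC|+|CA|+|AB|}$, and here $|AB|=p$, $|AC|=q$, $|BC|=b+c$, so substituting $A(0,a),B(-b,0),C(c,0)$ reproduces $E$ exactly. Alternatively, and matching the computation already in the text, one checks via the distance formulas~\eqref{eq1} that $E$ is equidistant from the lines $AB$, $AC$ and $BC$, each distance equalling $\frac{ab+ac}{b+c+p+q}$; since $E$ visibly has positive $y$-coordinate and the expressions $ax-by+ab$, $-ax-cy+ac$ evaluated at $E$ are positive (by the same algebra as in \eqref{eq7}--\eqref{eq8}), $E$ lies inside $\Delta$ and is therefore the incenter rather than an excenter.

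There is no real obstacle here: each step is either a routine limit or a standard fact about the incenter. The only points meriting a line of care are that the sequence $(x_{\min,n},y_{\min,n})$ is well defined for all $n\ge 2$ (which is precisely Theorem~\ref{mim inside triangle}) and that the limiting point $E$ is genuinely interior to $\Delta$, so that calling it the incenter is legitimate; both are immediate from the explicit formulas already obtained.
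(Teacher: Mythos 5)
Your argument is correct and follows essentially the same route as the paper: take the closed forms from Theorem~\ref{mim inside triangle}, pass to the limit using $t_n,r_n\to 1$ and $\lambda_n\to p+q+b+c$, and identify the limit point as the incenter by checking equal distances to the three sides (your barycentric-formula alternative is a harmless shortcut for that last step). Nothing further is needed.
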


\bigskip

\end{document}